\documentclass[reqno]{amsart}

\usepackage{amsmath,amssymb,amsthm,mathtools}
\usepackage[margin=1in]{geometry}
\usepackage{enumitem}
\usepackage{microtype}
\usepackage{xcolor}
\usepackage{hyperref}
\usepackage[capitalize,nameinlink]{cleveref}
\hypersetup{
  colorlinks=true,
  linkcolor=blue,
  citecolor=red,
  urlcolor=blue
}

\newtheorem{theorem}{Theorem}[section]
\newtheorem{proposition}[theorem]{Proposition}
\newtheorem{lemma}[theorem]{Lemma}
\newtheorem{claim}[theorem]{Claim}

\theoremstyle{definition}
\newtheorem{definition}[theorem]{Definition}
\theoremstyle{remark}

\crefname{proposition}{Proposition}{Propositions}
\crefname{claim}{Claim}{Claims}

\newcommand{\F}{\mathbb F}
\newcommand{\R}{\mathbb R}
\newcommand{\E}{\mathbb E}
\renewcommand{\P}{\mathbb P}
\newcommand{\cQ}{\mathcal Q}
\newcommand{\cH}{\mathcal H}
\newcommand{\one}{\mathbf 1}
\newcommand{\qbinom}[2]{\genfrac{[}{]}{0pt}{}{#1}{#2}_{\!2}}
\DeclareMathOperator{\Aff}{Aff}
\DeclareMathOperator{\Gr}{Gr}
\DeclareMathOperator{\Span}{span}
\DeclareMathOperator{\supp}{supp}
\DeclareMathOperator{\dir}{dir}
\DeclareMathOperator{\codim}{codim}
\DeclareMathOperator{\rank}{rank}

\usepackage[
backend=biber,
style=numeric,
maxnames=99,
giveninits=true
]{biblatex}
\renewbibmacro{in:}{%
  \ifentrytype{article}{}{\printtext{\bibstring{in}\intitlepunct}}}

\title{Asymptotically sharp bounds for affine subspace statistics in $\F_2^n$}

\makeatletter
\newcommand\thankssymb[1]{\textsuperscript{\@fnsymbol{#1}}}
\makeatother

\author[T.-W. Chao]{Ting-Wei Chao\thankssymb{1}}
\author[Z. Xu]{Zixuan Xu\thankssymb{1}}
\author[D. Zakharov]{Dmitrii Zakharov\thankssymb{1}}

\thanks{\thankssymb{1}Department of Mathematics, Massachusetts Institute of Technology,
Cambridge, MA, USA. Emails: \texttt{\{twchao, zixuanxu, zakhdm\}@mit.edu}.}

\begin{document}

\begin{abstract}
Given a subset $A \subseteq \F_2^n$, we can consider the distribution of the intersection size of $A$ with a uniformly random $d$-flat $F$. Motivated by the edge statistics problem and the hypercube statistics problem, the affine subspace statistics problem concerns the maximum of $\mathbb{P}[|F\cap A|=s]$ among $A \subseteq \F_2^n$ for any fixed $s\in\{1,\dots,2^d\}$ over a uniformly random $d$-flat $F$. We use $\lambda^*(d,s)$ to denote the limit of the maximum when $n$ goes to infinity.

In this note, we prove tight bounds for $\lambda^*(d,s)$ in two different regimes.
For $s=j2^k$ where $j$ is a positive odd integer, the best known lower bound construction achieving $\lambda^*(d,s)\ge 1-2^{-k}$ is due to taking $A$ as the union of $j$ parallel $(n-d+k)$-flats in $\F_2^n$. Our main result is a matching upper bound with an additive error term of $O(2^{-3k/2})$.
We also study the case $s=1$, where we determine $\lambda^*(d,1)$ exactly. We show that the random construction where each point is included with probability $2^{-d}$ is optimal.
\end{abstract}

\maketitle

\section{Introduction}

Let $A\subseteq \F_2^n$ be a subset. For integers $d\geq 0$ and $1\leq s\leq 2^d$, let $\lambda^*(n,d,s,A)$ be the fraction of $d$-flats ($d$-dimensional affine subspaces) that intersect $A$ in exactly $s$ points. What is the maximum possible value of $\lambda^*(n,d,s,A)$ over all possible choices of $A$?

This question is motivated by the {\em hypercube statistics} problem initiated by Alon, Axenovich, and
Goldwasser \cite{AlonAxenovichGoldwasser2024}, where instead of all $d$-flats, one considers only coordinate axis-aligned $d$-flats (i.e. $d$-dimensional subcubes). They obtained general upper and lower bounds for various values of $d$ and $s$ and conjectured that for $s=1$ the maximum fraction tends to $1/e+o_d(1)$ as $d,n \to \infty$ . Similar inducibility problems for graphs and hypergraphs are well-studied in works on edge statistics including \cite{AlonHefetzKrivelevichTyomkyn2020,KwanSudakovTran2019,MartinssonMoussetNoeverTrujic2019,FoxSauermann2020,JainKwanMubayiTran2025,GrebennikovKwan2025}.
In particular, the edge statistics conjecture posed in Alon--Hefetz--Krivelevich--Tyomkyn~\cite{AlonHefetzKrivelevichTyomkyn2020}, which was fully resolved by the works of Kwan--Sudakov--Tran~\cite{KwanSudakovTran2019} and Fox--Sauermann~\cite{FoxSauermann2020}, states that for all integers $k$ and $\ell \in \{1, \dots, {k\choose 2}\}$, the probability of a uniformly random subset of $k$ vertices induces exactly $\ell$ edges in any graph is at most $1/e+o_k(1)$.

In this paper, we study the basis-invariant version of the hypercube statistics problem, introduced in \cite{Xu2026}. We begin by formally defining the problem. Let $n\ge d\ge1$, $0\le s\le2^d$, and
let $A\subseteq\F_2^n$. Define
\[\lambda^*(n,d,s,A):=\P_{F\in\Aff(n,d)}\bigl[|F\cap A|=s\bigr],\]
where $\Aff(n,d)$ is the set of all $d$-flats in $\F_2^{n}$ and $F$ is chosen uniformly at random from $\Aff(n,d)$. We then set
\[\lambda^*(n,d,s):=\max_{A\subseteq\F_2^n}\lambda^*(n,d,s,A)\quad\text{and}\quad \lambda^*(d,s):=\lim_{n\to\infty}\lambda^*(n,d,s).\]
The limit exists because $\lambda^*(n,d,s)$ is non-increasing in $n$. Indeed, fix a subset $A\subseteq\F_2^{n+1}$ and choose a uniformly random $n$-flat $H\subseteq\F_2^{n+1}$. Conditioning on $H$, choose a uniformly
random $d$-flat $F\subseteq H$. The marginal distribution of $F$ is uniform over $\Aff(n+1,d)$, since every $d$-flat is contained in the same
number of $n$-flats. Therefore,
\[\lambda^*(n+1,d,s,A)=\E_H\,\P_{F\subseteq H}[|F\cap(A\cap H)|=s]\le\lambda^*(n,d,s).\] 

We first recall the lower bound construction given in \cite{AlonAxenovichGoldwasser2024}, which was constructed for the hypercube statistic problem, but the construction gives the same bound for the affine subspace statistics problem. 

\begin{proposition}\label{prop:lower}
Let $0\le k\le d$ and let $s=j2^k\le2^d$, where $j$ is a positive odd
integer.  Then
\begin{equation}\label{eq:lambda}
  \lambda^*(d,s)\ge  1-\bigl(1-2^{-(d-k)}\bigr)2^{-k}+O(2^{-2k}).  
\end{equation}
\end{proposition}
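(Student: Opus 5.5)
We take $A$ to be the union of $j$ parallel $(n-d+k)$-flats, written in a form that makes intersections with flats easy to count. Put $m=d-k$. Fix a surjective linear map $L:\F_2^n\to\F_2^m$ and a set $S\subseteq\F_2^m$ with $|S|=j$, and let $A=L^{-1}(S)$. This is possible because $j\le s/2^k\le 2^{d-k}=2^m$. Each fibre $L^{-1}(y)$ is an $(n-m)=(n-d+k)$-flat, so $A$ is a union of $j$ parallel $(n-d+k)$-flats.

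The key observation is the following. Let $F=a+V$ be a $d$-flat with $\dim V=d$. If $L|_V$ is surjective, then $L|_F$ maps onto $\F_2^m$, and each fibre $F\cap L^{-1}(y)$ is a coset of $\ker L|_V$. That kernel has dimension $d-m=k$, so each fibre has exactly $2^k$ points. Summing over $y\in S$ gives $|F\cap A|=j2^k=s$. Therefore
\[\lambda^*(n,d,s,A)\ge \P_{V}\bigl[L(V)=\F_2^m\bigr],\]
where $V$ is a uniformly random $d$-dimensional linear subspace of $\F_2^n$.

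Next we compute this probability in the limit $n\to\infty$. Sample $d$ independent uniform vectors $v_1,\dots,v_d\in\F_2^n$. Conditioned on their being linearly independent, their span is a uniform $d$-subspace. The conditioning event has probability $1-O(2^{d-n})$, so the conditional and unconditional probabilities differ by $o(1)$ as $n\to\infty$. Without conditioning, $Lv_1,\dots,Lv_d$ are independent uniform vectors in $\F_2^m$, since $L$ is surjective. The probability that $d$ uniform vectors span $\F_2^m$ is the standard count
\[\prod_{i=0}^{m-1}\bigl(1-2^{i-d}\bigr)=\prod_{t=k+1}^{d}\bigl(1-2^{-t}\bigr).\]
Since $\lambda^*(n,d,s)$ is non-increasing in $n$ with limit $\lambda^*(d,s)$, letting $n\to\infty$ gives $\lambda^*(d,s)\ge\prod_{t=k+1}^{d}(1-2^{-t})$.

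Finally, the Weierstrass product inequality $\prod(1-x_t)\ge 1-\sum x_t$ for $x_t\in[0,1]$ gives
\[\prod_{t=k+1}^{d}(1-2^{-t})\ \ge\ 1-\sum_{t=k+1}^{d}2^{-t}=1-\bigl(1-2^{-(d-k)}\bigr)2^{-k}.\]
This is \eqref{eq:lambda}, and in fact with a nonnegative error term. Expanding the product to second order shows the true value is $1-(1-2^{-(d-k)})2^{-k}+\Theta(2^{-2k})$, which explains the form in which the statement is written. The only step needing care is the limit passage from a uniform random $d$-subspace to $d$ independent uniform vectors. The $O(2^{d-n})$ bound handles it, and it vanishes as $n\to\infty$.
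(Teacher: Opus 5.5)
Your proof is correct and essentially the paper's: you use the same construction $A=L^{-1}(S)$ and the same fibre-counting argument, and you differ only in obtaining the limiting surjectivity probability $\prod_{t=k+1}^d(1-2^{-t})$ via independent random vectors rather than the paper's exact count $2^{(d-k)(n-d)}\qbinom{n-d+k}{k}/\qbinom{n}{d}$, and in closing with the Weierstrass inequality (which gives a nonnegative error term) instead of a second-order expansion. One small caveat on your closing aside: the error is not $\Theta(2^{-2k})$ when $d-k\le 1$, since then the product equals $1-(1-2^{-(d-k)})2^{-k}$ exactly.
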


The construction is to take $A$ as the union of $j$ parallel flats of codimension $d-k$. One can verify that $\lambda^*(d,s,A)\ge \prod_{i = k+1}^d(1-2^{-i})$ which implies (\ref{eq:lambda}).

For $s=j2^k$, \cite{Xu2026}
showed that $1-\lambda^*(d,j2^k)=\Theta(2^{-k})$. To be more precise, for fixed $k$ and $d\to\infty$, the upper bound was
\[
  \lambda^*(d,j2^k)\le 1-\frac23\bigl(1-2^{-(d-k)}\bigr)2^{-k}
  +O(2^{-2k})+o_d(1),
\]
so the leading coefficient in the second term is between
$2/3$ and $1$. Our main result is to prove an upper bound with the correct leading coefficient on the second term in the regime where $k$ is sufficiently large.

% It wasconjectured there that, for fixed $k$ and $d\to\infty$, the parallel-flatconstruction is asymptotically optimal. 

\begin{theorem}\label{thm:main}
Let $s=j2^k\le2^d$, where $j$ is a positive odd integer and $k$ is
sufficiently large.
Then
\[\lambda^*(d,s)\le 1-\bigl(1-2^{-(d-k)}\bigr)2^{-k}+O(2^{-3k/2}).\]
Consequently, together with \cref{prop:lower},
\[\lambda^*(d,s) =1-\bigl(1-2^{-(d-k)}\bigr)2^{-k}+O(2^{-3k/2}).\]
The implicit constants are absolute.
\end{theorem}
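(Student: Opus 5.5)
The plan is to reduce the asymptotic statement to a finite ``local'' inequality on a larger flat, and then prove that inequality by Fourier analysis plus $2$-adic considerations. Fix $m\ge 1$ (eventually $m$ grows, but independently of $n$), and sample a uniformly random $(d+m)$-flat $H\subseteq\F_2^n$ and then a uniformly random $d$-flat $F\subseteq H$. As in the monotonicity argument in the introduction, $F$ is uniform in $\Aff(n,d)$. Hence
\[
\lambda^*(n,d,s,A)\le \max_{B\subseteq\F_2^{d+m}}\P_{F\in\Aff(d+m,d)}\bigl[|F\cap B|=s\bigr].
\]
It therefore suffices to show that, for every $B\subseteq\F_2^{d+m}$ with $m$ large, the fraction of \emph{bad} $d$-flats (those with $|F\cap B|\ne j2^k$) is at least $(1-2^{-(d-k)})2^{-k}-O(2^{-3k/2})$. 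A first sanity check, with $m=1$, already shows why a single-step local argument is too weak. Inside a $(d+1)$-flat, the two halves cut by a direction $u$ have counts $\tfrac12(|B|\pm 2^{d}\hat{\one}_B(u))$. All $d$-flats being good forces $\hat{\one}_B(u)=0$ for all $u\ne 0$, which is impossible unless $s\in\{0,2^{d+1}\}$. But for $B$ a union of parallel $(k+1)$-flats this only certifies a bad fraction of about $2^{-(k+1)}$. So the bound must come from aggregating over many scales, and I will use the flag/chain structure.

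\textbf{Step 1: a chain decomposition.} Write $F=F_d\supset F_{d-1}\supset\dots\supset F_k$ as a random flag of subflats, and let $c_i=|F_i\cap B|$. Then $c_{i}=c_{i-1}+c'_{i-1}$, where $c'_{i-1}$ counts the complementary parallel copy. Since $v_2(c_d)=k$ exactly for good $F$, along the chain there is a ``first level'' $i\ge k+1$ at which the two halves split unevenly in a way compatible with valuation $k$. This gives a natural way to charge each bad flat to the level at which the $2$-adic structure breaks. For the parallel-flat construction the breaking at level $i$ happens with probability $\approx 2^{-i}$, and summing over $i=k+1,\dots,d$ yields $\sum_{i=k+1}^{d}2^{-i}=2^{-k}(1-2^{-(d-k)})$. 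This matches the target exactly, so I will aim to prove the level-by-level lower bound
\[
\P[\text{break at level } i]\ \ge\ 2^{-i}-O(2^{-i-k/2}),
\]
and then sum.

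\textbf{Step 2: the local inequality at each level via Fourier.} At level $i$, condition on an $(i+1)$-flat $G$ containing $F_i$. The counts of the $i$-subflats of $G$ are governed by the Fourier coefficients of $\one_{B\cap G}$, as in the $m=1$ computation. Goodness of most $i$-subflats, together with the valuation constraint $v_2(c_i)=k$, forces $\hat{\one}_{B\cap G}$ to vanish on all but a small set of directions. Since the Fourier coefficients of a set in $\F_2^{i+1}$ take values in $2^{-(i+1)}\cdot 2\mathbb Z$ shifted, and the nonzero ones have size at least $2^{-(i+1)}\cdot 2^{k+1}$ when the half-counts keep valuation $\ge k$, Parseval caps their number. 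An uncertainty-principle argument (the support of $\hat{\one}$ times the support of $\one$ is at least $2^{i+1}$) then shows that the bad directions make up at least a $2^{-i}$-type fraction. Equality essentially requires $B\cap G$ to be a union of cosets of a subspace, as in \cref{prop:lower}.

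\textbf{Step 3 (main obstacle): stability and the error term.} The difficulty is that Steps 1--2 give the right constant only when the local structure is \emph{exactly} that of parallel flats. For general $B$ the events at different levels are correlated, and one must rule out configurations that save a constant fraction at the first few levels ($i=k+1,k+2$), where almost all of the $2^{-k}$ loss lives. I will handle this with a stability version of the Step 2 inequality. If the bad fraction in a random $(k+2)$-flat is below $2^{-(k+1)}+2^{-(k+2)}-\varepsilon$, then $B$ restricted to a typical $(k+2)$-flat is $\varepsilon$-close (in Fourier $\ell^2$) to a union of parallel $k$-flats. I will then show that these local structures glue consistently across overlapping flats, and that any inconsistency costs an extra bad fraction. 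Balancing the closeness parameter against the gluing loss should give $\varepsilon\approx 2^{-k/2}$ relative to the main term, i.e.\ an additive error $O(2^{-3k/2})$. The higher levels $i\ge k+3$ contribute only $O(2^{-k-2})$ in total, so they can be handled crudely once the structure at the bottom levels is fixed. Finally letting $m\to\infty$ and $n\to\infty$ gives the upper bound, and combining with \cref{prop:lower} gives the matching asymptotic.
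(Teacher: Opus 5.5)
\textbf{There is a genuine gap.} The proposal is a program rather than a proof, and the step you yourself call the main obstacle is never carried out. The stability version of Step 2, the gluing of local structures across overlapping flats, and the balancing that \emph{should} give $\varepsilon\approx 2^{-k/2}$ are stated as intentions with no argument behind them.

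The steps you do spell out also fail as written.
\begin{itemize}
\item In Step 1 the chain $F_d\supset\dots\supset F_k$ lives inside $F$, but goodness is only defined for $d$-flats. A bad flat can also have $\nu_2(|F\cap B|)=k$ (e.g.\ $|F\cap B|=(j+2)2^k$), so there is no well-defined level at which a bad flat ``breaks''. Summing per-level lower bounds would also require the break events to be disjoint and each to force badness, which you do not address.
\item In Step 2, ``goodness of $i$-subflats'' for $i<d$ is likewise undefined.
\item Also in Step 2, the uncertainty principle only gives $|\supp\widehat{\one_{B\cap G}}|\ge 2^{i+1}/|B\cap G|$. When $|B\cap G|=j2^{k+1}$ this is $2^{i-k}/j$, losing a factor $j$ that can be as large as $2^{d-k}$. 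Passing to the complement does not help when $|B\cap G|\approx |G|/2$, and the theorem needs absolute constants. Parseval bounds the support only from above, which gives no lower bound on the bad fraction.
\end{itemize}
The bound that is uniform in $j$ comes from divisibility, as in \cref{lem:2adic-support}. If $\widehat{\one_S}$ vanished on $L\setminus\{0\}$ for some $(r+1)$-dimensional $L$ with $r=\nu_2(|S|)$, every coset of $L^\perp$ would contain $|S|/2^{r+1}\notin\mathbb Z$ points. So the support is a blocking set, and \cref{thm:bose-burton} gives $|\supp\widehat{\one_S}|\ge 2^{m-r}$. (Minor: in your sanity check the exceptional values are $s\in\{0,2^d\}$, not $\{0,2^{d+1}\}$.)

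\textbf{The missing mass is not where you look for it.} The factor $1-2^{-(d-k)}$, which you try to assemble level by level over $i=k+1,\dots,d$, comes for free from a single $(d+1)$-flat. Choose $W$ with $\rho_W\le\rho<2^{-k}$. This forces $|W\cap A|=2s$ and $2^{d-k}\le|T|\le2^{d+1-k}$ for $T=\supp\widehat{\one_{A\cap W}}$. So the hyperplanes of $W$ alone already certify a bad fraction of about $2^{-(k+1)}(1-2^{-(d-k)})$.

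What is lost at $m=1$ is only a factor $\tfrac12$. The paper recovers it not by a multi-scale chain inside $F$, but by enlarging the ambient space to $n=d+a$ and counting $d$-flats transversal to $W$:
\begin{itemize}
\item Take any $a$-dimensional $L\le\widehat W$ with $L\cap T=\{0,\gamma\}$, and fix $w\in W$ with $\gamma(w)=1$.
\item The $2^{a^2}$ flats $Q$ with $Q\cap W$ a coset of $L^\perp$ split into $2^{a^2-a}$ groups according to $Q^+=Q\cup(Q+w)$.
\item Since $\widehat{\one_{A\cap W}}(\gamma)\neq0$, the two cosets of $L^\perp$ in $Q^+\cap W$ contain different numbers of points of $A$. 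A pairing and averaging argument then shows each group contains at least two bad flats.
\item Counting such $L$, using $|T|\le 2^{d+1-k}$ to control the choices that avoid $T$, gives $\rho\ge2^{-k}(1-2^{-(d-k)})(1-2^{-a})\prod_{i=1}^{a-1}(1-2^{i-k})$.
\item Taking $a=\lfloor k/2\rfloor$ balances $2^{-a}$ against $2^{a-k}$ and produces the $O(2^{-3k/2})$ error.
\end{itemize}
This transversal counting is the key idea your proposal lacks. With it, no stability or gluing argument is needed.
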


In particular, if $k\to\infty$ and $d-k\to\infty$, then
\[
  1-\lambda^*(d,j2^k)=(1+o(1))2^{-k}.
\]

Our second result determines the exact value of $\lambda^*(d,1)$. Note that taking $A$ to be a uniformly random set of density $2^{-d}$ in $\F_2^n$ gives the lower
bound $\lambda^*(d,1)\ge (1-2^{-d})^{2^d-1}$. The second author conjectured in \cite{Xu2026} that this construction is asymptotically optimal. We show that this is indeed optimal for any $d\geq 1$.

\begin{theorem}\label{thm:singleton}
For every integer $d\ge1$,
\[
  \lambda^*(d,1)=(1-2^{-d})^{2^d-1}.
\]
Consequently, $\lambda^*(d,1)\to e^{-1}$ as $d\to\infty$.
\end{theorem}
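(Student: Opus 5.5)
The lower bound is immediate. Take $A$ to be a random set in which each point is included independently with probability $p=2^{-d}$. The $2^d$ points of any fixed $d$-flat are distinct, so $\P[|F\cap A|=1]=2^dp(1-p)^{2^d-1}=(1-2^{-d})^{2^d-1}$ in expectation, and hence for some choice of $A$. Note that $(1-2^{-d})^{2^d-1}=\max_{t\in[0,1]}\varphi_d(t)$, where $\varphi_i(t):=2^i(1-t)t^{2^i-1}$. So the whole task is the upper bound $\lambda^*(n,d,1,A)\le \max_t\varphi_d(t)+o_n(1)$. The limit $e^{-1}$ then follows from $(1-2^{-d})^{2^d-1}\to e^{-1}$.

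For the upper bound I would prove, by induction on $i$, a joint statement about two statistics of a random $i$-flat $F$: $a_i:=\P[|F\cap A|=1]$ and $b_i:=\P[F\cap A=\emptyset]$. The statement should be strong enough to survive doubling. The template is the random construction, where $b_i=t^{2^i}$ and $a_i=\varphi_i(t)$ with $t=1-p$. I would aim to show that $(a_i,b_i)$ lies, up to $o_n(1)$, in the region under the curve $\{(\varphi_i(t),t^{2^i})\}$ (or its concave hull).

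The doubling step is as follows. Write a random $i$-flat as $C\cup(C+v)$, where $C$ is a coset of a random $(i-1)$-dimensional subspace $W'$ and $v\notin W'$ is random. Fix $W'$ and pass to the quotient $\F_2^n/W'$. Each coset of $W'$ has type $0$, $1$, or ${\ge}2$ (its intersection size with $A$, capped at $2$), and $C$ and $C+v$ are two uniformly random distinct cosets. As $n\to\infty$ they are asymptotically independent. Writing $a'(W')$ and $b'(W')$ for the fractions of cosets of $W'$ of types $1$ and $0$, this gives
\[
a_i=\E_{W'}\bigl[2a'(W')b'(W')\bigr]+o(1),\qquad b_i=\E_{W'}\bigl[b'(W')^2\bigr]+o(1).
\]
The identity $2\varphi_{i-1}(t)\,t^{2^{i-1}}=\varphi_i(t)$ together with $(t^{2^{i-1}})^2=t^{2^i}$ shows that the template curve is exactly preserved by this map. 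Hence the pointwise inequality $a'\le\varphi_{i-1}(\tau)$, where $b'=\tau^{2^{i-1}}$, would propagate.

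I expect the main obstacle to be that the inductive hypothesis only controls the averages $\E_{W'}a'$ and $\E_{W'}b'$, not each $W'$ individually, whereas the step uses the product $a'b'$ for each fixed $W'$. To handle this, I would strengthen the hypothesis so that it holds conditionally on the direction subspace. Concretely, I would run the induction along a random flag $W_0\subset W_1\subset\dots\subset W_d$, proving that for every fixed $W_{i-1}$ the conditional pair satisfies the curve bound. The base case is $i=0$, where cosets are points and $a_0=\delta=1-b_0$ is trivially on the curve. After that, each step only averages over the new direction $v$, which is exactly where independence of the two halves holds. If this conditional version does not close, the fallback is to check concavity of $\Phi_i(b):=\varphi_i(b^{2^{-i}})$ on the relevant range of $b$. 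Where $\Phi_i$ is concave, Jensen gives $\E[2a'b']\le\E\Phi_i(b'^2)\le\Phi_i(\E b'^2)$. Where it fails, I would replace $\Phi_i$ by its concave majorant, whose maximum value is still $\max_t\varphi_d(t)$, which is all that is needed at $i=d$.
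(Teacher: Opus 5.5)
Your lower bound is fine. Your target inequality $a_i\le\Phi_i(b_i)+o(1)$, with $\Phi_i(b)=2^i\bigl(b^{1-2^{-i}}-b\bigr)$, is also true: it is exactly what the paper's argument yields. Moreover, $\Phi_i$ is concave on all of $[0,1]$, so Jensen is never the obstruction.

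The gap is the step $2a'b'\le\Phi_i(b'^2)$. It needs $a(W')\le\Phi_{i-1}(b(W'))$ for each \emph{fixed} subspace $W'$, or, in your flag formulation, the curve bound conditional on a fixed $W_{i-1}$. This is false. For a fixed subspace, the types of its cosets can be prescribed arbitrarily. For example, if $A$ is a linear complement of $W'$, then $a(W')=1$ and $b(W')=0$.

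The conditional flag version fails just as concretely. Take $d=2$, $A=\{\xi_1=\xi_2=0\}$, and $v_1$ with $\xi_1(v_1)=1$, $\xi_2(v_1)=0$. Then half the cosets of $W_1=\langle v_1\rangle$ have type $1$ and half have type $0$. So a random $2$-flat containing the direction $v_1$ meets $A$ in exactly one point with probability tending to $1/2>27/64=\lambda^*(2,1)$. No bound conditional on the earlier directions can give the sharp constant; it only emerges after averaging over all directions jointly. Your Jensen fallback does not rescue this, because its first inequality $\E[2a'b']\le\E\,\Phi_i(b'^2)$ is exactly the false pointwise bound.

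What is missing is a way to control the correlation between $a'$ and $b'$ as $W'$ varies. The pair of statistics $(a_i,b_i)$ is not closed under your recursion. Put $f=\one_{A^c}$, and let $c'(W')$ be the probability, over uniform $x$, that $(x+W')\setminus\{x\}$ avoids $A$. Then $a'=2^{i-1}(c'-b')$, and so $a_i/2^i\approx\E[c'b']-b_i$. Cauchy--Schwarz over $W'$ bounds $\E[c'b']$ by $\E[c'^2]^{1/2}\,b_i^{1/2}$. But $\E[c'^2]$ is a new pattern with two ``holes''. Iterating this forces you to handle arbitrary $2^d$-tuples of functions, which is precisely the Gowers--Cauchy--Schwarz inequality.

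The paper uses that inequality directly. With $u=\|f\|_{U^d}$,
\[p/2^d=\Lambda_d(1,f,\ldots,f)-\Lambda_d(f,\ldots,f)\le u^{2^d-1}-u^{2^d},\]
and the right-hand side is maximized at $u=1-2^{-d}$.
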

 
We emphasize that \cref{thm:singleton} concerns
all affine flats. For the analogous hypercube statistics problem for $s = 1$, it was conjectured by Alon, Axenovich, and Goldwasser in \cite{AlonAxenovichGoldwasser2024} that the random construction is asymptotically optimal. This problem is still open, but one may view \cref{thm:singleton} as a weaker result than proving the same bound for hypercube statistics.

\subsection{Paper Organization}
In \cref{sec:prelim}, we include definitions and facts used in our proof.  We present the proof of \cref{thm:main} in \cref{sec:main-proof} and the proof of \cref{thm:singleton} in \cref{sec:singleton-proof}.

\subsection{Acknowledgments.} We thank Dor Minzer for helpful discussions. D.Z. was supported by the Simons Dissertation fellowship.

AI tools have been used in the preparation of this manuscript. The proof of \cref{thm:singleton} is due to ChatGPT 5.5-Pro and was found when trying to generalize an argument for $d=2$ observed by the authors. The ideas in the proof of \cref{thm:main} are entirely human generated. ChatGPT 5.6 was used for editing and proofreading an initial draft of the current manuscript.

\section{Preliminaries}\label{sec:prelim}

Let $\Gr(n,d)$ be the set of $d$-dimensional linear subspaces of
$\F_2^n$, and let $\Aff(n,d)$ be the set of affine $d$-flats. Every
$F\in\Aff(n,d)$ has the form $F=x+U$ with $U\in\Gr(n,d)$; we write
$\dir(F)=U$. Recall the Gaussian binomial coefficient
\[
  \qbinom{n}{d}
  :=\prod_{i=0}^{d-1}\frac{2^{n-i}-1}{2^{d-i}-1}.
\]
It counts the number of elements in $\Gr(n,d)$, and consequently
\[
  |\Aff(n,d)|=2^{n-d}\qbinom{n}{d}.
\]
Note that a uniformly random affine $d$-flat can be sampled by first choosing
$U\in\Gr(n,d)$ uniformly and then choosing a uniformly random coset of
$U$.

\subsection{The lower-bound construction}

We now prove \cref{prop:lower}.
Fix a surjective linear map
\[
  \pi:\F_2^n\longrightarrow\F_2^{d-k}
\]
% and define
% \[
%   c_n(d,k):=
%   \P_{U\in\Gr(n,d)}[\rank(\pi|_U)=d-k].
% \]
and write $K=\ker \pi$, then we have $\dim K=n-d+k$. We first compute the probability $p$ that $\rank(\pi|_U)=d-k$, where $U$ is chosen uniformly at random from $\Gr(n,d)$. The restriction $\pi|_U$ is surjective if and only if $\dim(U\cap K)=k$. Thus, we may first choose
$U\cap K$, and then describe $U/(U\cap K)$ as the graph of a linear
map into $K/(U\cap K)$. This gives
\begin{equation}\label{eq:finite-rank-probability}
  p
  =\frac{2^{(d-k)(n-d)}\qbinom{n-d+k}{k}}{\qbinom{n}{d}}.
\end{equation}
% Expanding the Gaussian binomial coefficients shows that
% \begin{equation}\label{eq:rank-limit}
%   \lim_{n\to\infty}c_n(d,k)
%   =\prod_{i = k+1}^d(1-2^{-i})=:c(d,k).
% \end{equation}
Choose a set $T\subseteq\F_2^{d-k}$ of size $j$ and put
\[
  A:=\pi^{-1}(T).
\]
Thus, $A$ is a union of $j$ parallel affine flats, namely the fibers
$\pi^{-1}(t)$ for $t\in T$.  Let
$F=x+U$ be a $d$-flat. If $\pi|_U$ is surjective, then the affine map
$\pi|_F$ is also surjective and all of its fibers have size $2^k$.
Consequently,
\[
  |F\cap A|=|T|2^k=j2^k=s.
\]
It follows that
\[\lambda^*(n,d,s,A)\ge p
  =\frac{2^{(d-k)(n-d)}\qbinom{n-d+k}{k}}{\qbinom{n}{d}}.\]  
Letting $n\to\infty$ proves $\lambda^*(d,s)\ge \prod_{i = k+1}^d(1-2^{-i})$.

We have an elementary estimation
\[
\prod_i(1-x_i)=1-\sum_i x_i+O\biggl(\left(\sum_i x_i\right)^2\biggr),
\]
for $0\le x_i\le1/2$. It follows that $\prod_{i = k+1}^d(1-2^{-i}) = 1- (1-2^{-(d-k)})2^{-k} + O(2^{-2k})$.

\subsection{Fourier support and blocking sets}

Let $H$ be an $m$-dimensional vector space over $\F_2$, and let
$\widehat H=\operatorname{Hom}(H,\F_2)$ be its dual.  For
$f:H\to\R$, we use the unnormalized Fourier transform
\[
  \widehat f(\xi):=\sum_{x\in H}f(x)(-1)^{\xi(x)},
  \qquad \xi\in\widehat H.
\]
If $U\le H$ and $x\in H$, Fourier inversion gives the coset identity
\begin{equation}\label{eq:coset-fourier}
  \sum_{u\in U}f(x+u)
  =2^{-\codim_H U}
    \sum_{\xi\in U^\perp}\widehat f(\xi)(-1)^{\xi(x)},
\end{equation}
where
\[
  U^\perp:=\{\xi\in\widehat H:\xi(u)=0\text{ for every }u\in U\}.
\]

Over $\F_2$, the points in the projective space $\operatorname{PG}(m-1,2)$
may be identified with the nonzero vectors in an $m$-dimensional vector
space.  A projective $r$-flat is the set of nonzero vectors in an
$(r+1)$-dimensional linear subspace.  A set of projective points is a
\emph{blocking set with respect to $r$-flats} if it meets every
projective $r$-flat.  We use the following theorem of Bose and Burton.

\begin{theorem}[Bose--Burton \cite{BoseBurton1966}]\label{thm:bose-burton}
If $D\subseteq\operatorname{PG}(m-1,2)$ is a blocking set with respect
to projective $r$-flats, then $|D|\ge2^{m-r}-1$.
\end{theorem}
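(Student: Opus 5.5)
We argue by induction on $m$, with $r$ arbitrary, restricting $D$ to hyperplanes and averaging over all of them. Since $D$ meets every projective $r$-flat of $\operatorname{PG}(m-1,2)$, its trace on any hyperplane meets every $r$-flat of that hyperplane. This lets us apply the induction hypothesis inside each hyperplane and then double-count incidences to recover a bound on $|D|$.

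\emph{Base cases.} If $r=m-1$, the only projective $(m-1)$-flat is the whole space. So $D$ must be nonempty, and $|D|\ge 1=2^{m-r}-1$. Cases with $r>m-1$ are vacuous, since there are no such flats.

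\emph{Inductive step.} Assume $r\le m-2$ and that the statement holds in dimension $m-1$.
\begin{itemize}
\item Let $H$ be any projective hyperplane, i.e.\ the nonzero vectors of a linear hyperplane, so $H\cong \operatorname{PG}(m-2,2)$.
\item Every projective $r$-flat contained in $H$ is also an $r$-flat of the ambient space, so it meets $D$, and hence it meets $D\cap H$.
\item Thus $D\cap H$ is a blocking set with respect to $r$-flats in $\operatorname{PG}(m-2,2)$. By induction, $|D\cap H|\ge 2^{m-1-r}-1$.
\end{itemize}

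\emph{Averaging.} There are $2^m-1$ hyperplanes, and each point lies in exactly $2^{m-1}-1$ of them. Summing $|D\cap H|$ over all $H$ and double counting gives
\[
|D|\,(2^{m-1}-1)\ \ge\ (2^m-1)(2^{m-1-r}-1).
\]
Using $\frac{2^m-1}{2^{m-1}-1}=2+\frac{1}{2^{m-1}-1}$, this becomes
\[
|D|\ \ge\ 2^{m-r}-2+\frac{2^{m-1-r}-1}{2^{m-1}-1}.
\]

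\emph{Conclusion.} Since $r\le m-2$, we have $2^{m-1-r}-1\ge 1$, so the fractional term is strictly positive. Hence $|D|>2^{m-r}-2$, and because $|D|$ is an integer, $|D|\ge 2^{m-r}-1$.

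The main point needing care is this final integrality step: the averaging bound only slightly exceeds $2^{m-r}-2$, and it is the positivity of the fractional term that forces the sharp bound.
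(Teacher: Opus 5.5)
Your proof is correct. The paper gives no proof of this statement: it quotes Bose--Burton as a black box and uses it only inside \cref{lem:2adic-support}. So your argument, induction on $m$ plus double counting over the $2^m-1$ hyperplanes, is a genuinely self-contained alternative to a citation.

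The individual steps check out:
\begin{itemize}
\item The trace of $D$ on a hyperplane blocks the $r$-flats of that hyperplane.
\item Each nonzero vector lies in exactly $2^{m-1}-1$ hyperplanes.
\item The bound $|D|\ge 2^{m-r}-2+\frac{2^{m-1-r}-1}{2^{m-1}-1}$ is right, and the fractional term lies in $(0,1]$, so integrality forces exactly $2^{m-r}-1$.
\item The assumption $m\ge 2$, needed to divide by $2^{m-1}-1$, follows from $0\le r\le m-2$.
\end{itemize}

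Two cosmetic points. The induction is cleanest phrased as: fix $r\ge 0$ and induct on $m\ge r+1$. For $r\ge m$ the claim is not ``vacuous''; it is trivially true because $2^{m-r}-1\le 0$. Also, $r\ge 0$ is implicit, and that is the only case the paper uses.

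What the citation gives beyond your argument is the general-$q$ version and the characterization of equality (extremal sets are projective $(m-r-1)$-flats). Neither is needed here. Your averaging argument in fact extends verbatim to $\F_q$ via $\theta_n=q\theta_{n-1}+1$.

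Over $\F_2$ there is an even shorter route, which mirrors the greedy count the paper itself uses for nice subspaces. Suppose $|D|\le 2^{m-r}-2$, and put $D_0=D\cup\{0\}$. For $i=0,\dots,r$, choose $v_{i+1}\notin \Span(v_1,\dots,v_i)+D_0$. This is possible because that set has size at most $2^r(2^{m-r}-1)<2^m$. Each new vector is independent of the previous ones, and the span stays disjoint from $D$: if $u+v_{i+1}\in D$ with $u\in\Span(v_1,\dots,v_i)$, then $v_{i+1}\in \Span(v_1,\dots,v_i)+D_0$, a contradiction. The resulting $(r+1)$-dimensional subspace contradicts the blocking property.
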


The next lemma gives a lower bound of the Fourier-support of the indicator function of a set that we will be using in the proof of \cref{thm:main}.  We include its short derivation from \cref{thm:bose-burton}. For $x \in \mathbb Z$, we denote $\nu_2(x)$ the 2-adic valuation of $x$, i.e. the largest integer $k$ such that $2^k\mid x$.

\begin{lemma}\label{lem:2adic-support}
Let $H$ be an $m$-dimensional vector space over $\F_2$ and $S\subseteq H$ be a nonempty subset. Let $r=\nu_2(|S|)$.  Then we have
\[
  |\supp(\widehat{\one_S})|\ge2^{m-r}.
\]
\end{lemma}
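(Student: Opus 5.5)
The plan is to show that the nonzero part of the Fourier support is a blocking set in the projective space on $\widehat H$, and then apply \cref{thm:bose-burton}. The zero character supplies the one extra point. Throughout, write $f=\one_S$ and $D:=\supp(\widehat f)\setminus\{0\}$. Since $\widehat f(0)=|S|\neq 0$, the zero character lies in the support, so $|\supp(\widehat f)|=|D|+1$. It therefore suffices to prove $|D|\ge 2^{m-r}-1$.

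\textbf{Edge case.} First dispose of $r=m$. Then $|S|$ is divisible by $2^m$, so $S=H$ and the bound reads $|\supp(\widehat f)|\ge 1$, which is trivial. Hence assume $r\le m-1$, so that $(r+1)$-dimensional subspaces of $\widehat H$ exist. We identify $\operatorname{PG}(m-1,2)$ with the nonzero vectors of $\widehat H$.

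\textbf{Blocking property.} The key step is to show that $D$ meets every projective $r$-flat, i.e.\ every set $W\setminus\{0\}$ with $W\le\widehat H$ of dimension $r+1$. Suppose instead that $\widehat f$ vanishes on $W\setminus\{0\}$ for some such $W$. Let $U:=W^{\perp}\le H$, the annihilator of $W$ in $H$. Then $\codim_H U=r+1$ and $U^\perp=W$ by double-annihilator duality. Apply the coset identity \eqref{eq:coset-fourier} at any $x\in H$. Only the $\xi=0$ term survives, giving
\[
  |S\cap(x+U)|=\sum_{u\in U}f(x+u)=2^{-(r+1)}\widehat f(0)=\frac{|S|}{2^{r+1}}.
\]
The left side is an integer, but the right side is not, because $\nu_2(|S|)=r$. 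This contradiction shows $D$ is a blocking set with respect to projective $r$-flats.

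\textbf{Conclusion.} By \cref{thm:bose-burton}, $|D|\ge 2^{m-r}-1$. Adding the zero character gives $|\supp(\widehat{\one_S})|\ge 2^{m-r}$. The only point needing care is the duality bookkeeping: $(r+1)$-dimensional subspaces of $\widehat H$ correspond exactly to codimension-$(r+1)$ subspaces $U$ of $H$ with $U^\perp=W$. This is standard finite-dimensional linear algebra, so I expect no real obstacle.
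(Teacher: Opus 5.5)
Your proposal is correct and follows the paper's proof essentially step for step. You handle the edge case $r=m$, then show via the coset identity with $U=W^\perp$ that $\supp(\widehat{\one_S})\setminus\{0\}$ blocks every projective $r$-flat, since otherwise each coset would contain the non-integer $|S|/2^{r+1}$ points of $S$. You then apply Bose--Burton and add back the zero frequency.
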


\begin{proof}
If $r=m$, then there is nothing to prove, so we can assume that $r < m$. 
Put $f=\one_S$ and let $L\le\widehat H$
have dimension $r+1$.  If
$\widehat f(\xi)=0$ for every $\xi\in L\setminus\{0\}$, then applying
\eqref{eq:coset-fourier} with $U=L^\perp$ shows that every coset of
$L^\perp$ contains exactly
\[
  2^{-(r+1)}\widehat f(0)=\frac{|S|}{2^{r+1}}
\]
points of $S$.  This is impossible because $2^{r+1}\nmid |S|$.
Therefore
\[
  \supp(\widehat f)\setminus\{0\}
\]
meets every $(r+1)$-dimensional subspace of $\widehat H$, and hence is a
blocking set with respect to projective $r$-flats in
$\operatorname{PG}(m-1,2)$.  By \cref{thm:bose-burton}, it has at least
$2^{m-r}-1$ elements.  Since $\widehat f(0)=|S|\ne0$, adding the zero
frequency proves the lemma.
\end{proof}

\section{The $s=j2^k$ case}\label{sec:main-proof}

We prove \cref{thm:main} using the following 
estimate for $n$ close to $d$. We will then deduce the theorem using the monotonicity of $\lambda^*(n,d,s)$.

\begin{proposition}\label{prop:finite}
Let $s=j2^k\le2^d$, where $j$ is a positive odd integer. Let
$2\le a\le k$ and set
$n=d+a$. Then
\[
  \lambda^*(n,d,s)
  \le 1-2^{-k}\bigl(1-2^{-(d-k)}\bigr)(1-2^{-a})
  \prod_{i=1}^{a-1}(1-2^{i-k}).
\]
\end{proposition}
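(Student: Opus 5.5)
The plan is to fix $A\subseteq H:=\F_2^{n}$ with $n=d+a$, put $f=\one_A$, and bound from below the probability that a uniformly random $d$-flat $F=x+U$ is \emph{bad}, i.e.\ $|F\cap A|\ne s$. Since $U$ has codimension $a$, its annihilator $V:=U^\perp\le\widehat H$ is a uniformly random $a$-dimensional subspace. Write $g_U(y)=|A\cap(y+U)|$ on the $2^a$ cosets $y\in H/U$. By \eqref{eq:coset-fourier},
\[
  g_U(y)=2^{-a}\sum_{\xi\in V}\widehat f(\xi)(-1)^{\xi(y)} .
\]
Hence every coset of $U$ is good if and only if $\widehat f(0)=s2^a$ and $\widehat f$ vanishes on $V\setminus\{0\}$. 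The failure probability is $\E_V[\text{fraction of bad cosets of }U]$, and we split into two cases according to $|A|$.

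\emph{Case 1: $|A|\ne s2^a$.} Then $\sum_y g_U(y)=|A|\ne s2^a$, so every $U$ has at least one bad coset. The failure probability is therefore at least $2^{-a}\ge2^{-k}$, because $a\le k$. This already exceeds the claimed bound $2^{-k}(1-2^{-(d-k)})(1-2^{-a})\prod_{i<a}(1-2^{i-k})$.

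\emph{Case 2: $|A|=j2^{k+a}$.} Then $\nu_2(|A|)=k+a$, and \cref{lem:2adic-support} with $m=d+a$ gives
\[
  D:=\supp(\widehat f)\setminus\{0\},\qquad |D|\ge 2^{d-k}-1 .
\]
Only subspaces $V$ meeting $D$ produce bad flats. The first step is to lower bound $\P[V\cap D\ne\emptyset]$ by inclusion--exclusion over $D$, or by a second-moment argument on $|V\cap D|$. Each fixed nonzero $\xi$ lies in $V$ with probability $(2^a-1)/(2^{d+a}-1)\approx2^{-d}(1-2^{-a})$, and the pairwise correlations are controlled by counting $a$-subspaces through a $2$-dimensional subspace. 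This should produce the main factor $2^{-k}(1-2^{-(d-k)})(1-2^{-a})$.

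The second, and hardest, step is to show that when $V\cap D\ne\emptyset$, a large fraction of the cosets of $U$ are bad, not merely two of them. My first attempt is to replace "$V$ meets $D$" by a finer event that pins down the number of bad cosets. Fix a nonzero $\xi\in V\cap D$ and choose a complementary $(a-1)$-dimensional subspace $V'\le V$. Restrict attention to the $2^{a-1}$ codimension-$(a-1)$ flats $W=y+(V')^\perp$; each is a $(d+1)$-flat and a union of two cosets of $U$. Inside each $W$, apply the same reasoning as in Case~1: if $|A\cap W|\ne 2s$, then at least one of the two $d$-flats in $W$ is bad. If instead $|A\cap W|=2s$ for all $W$, use the $2$-adic lemma one level down on $A\cap W$, which has $\nu_2=k+1$ and $m=d+1$, to force badness at the frequency $\xi$ restricted to $W$.

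The factors $(1-2^{i-k})$ for $i=1,\dots,a-1$ should arise as the probability that successive generators of $V'$ avoid the offending frequencies. Concretely, I would build $V$ one vector at a time and at step $i$ require the new vector to avoid a set of relative density at most $2^{i-k}$. This set comes from a Bose--Burton-type count, \cref{thm:bose-burton}, in the quotient.

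I expect the main obstacle to be this second step. It requires converting "some nonzero Fourier coefficient lies on $V$" into "a constant fraction (ideally almost all) of the $2^a$ cosets are bad", with exactly the product $\prod_{i=1}^{a-1}(1-2^{i-k})$ as the loss. If the clean inductive-filtration argument fails, the fallback is to average the lower bound $|\{y:g_U(y)\ne s\}|\ge 2$ against the exact distribution of $|V\cap D|$. This sacrifices sharpness but may still suffice after choosing $a\approx k/2$ in the deduction of \cref{thm:main}.
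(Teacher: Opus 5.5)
\textbf{There is a genuine gap.} Your Case 1 is fine. So is the Case 2 reduction to $D=\supp(\widehat{\one_A})\setminus\{0\}$ with $|D|\ge 2^{d-k}-1$. But Step 2, which is the whole difficulty, is never carried out, and none of the routes you sketch for it works.

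\begin{itemize}
\item Your target claim, that a large fraction of cosets of $U$ is bad whenever $V\cap D\ne\emptyset$, cannot hold for every such $V$. Take $g_U$ equal to $s$ except on two cosets, where it is $s+1$ and $s-1$. Then $\widehat f$ is nonzero on half of $V$, yet only two cosets are bad.
\item Your filtration step fails. For $W=y+(V')^\perp$ with $|A\cap W|=2s$, the two halves of $W$ are bad iff $\widehat{\one_{A\cap W}}$ is nonzero at the frequency $\xi+V'$. That coefficient is a signed sum of $\widehat f$ over the whole coset $\xi+V'$, so it can cancel. Applying \cref{lem:2adic-support} to $A\cap W$ only says this support is large; it says nothing about this particular frequency.
\item The factors $1-2^{i-k}$ cannot come from Bose--Burton, which only gives \emph{lower} bounds on supports. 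Requiring each new vector to avoid a set of density at most $2^{i-k}$ needs an \emph{upper} bound on the Fourier support, and you never establish one.
\item Your fallback (at least two bad cosets whenever $V\cap D\neq\emptyset$) gives only $\rho\gtrsim 2^{1-a}\cdot 2^{-k}$. This is off by a factor $2^{a-1}$, so it proves neither the proposition nor the sharp constant in \cref{thm:main}.
\end{itemize}

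\textbf{The missing ideas.} First, the right finer event is $V\cap D=\{\xi\}$. In that case \eqref{eq:coset-fourier} gives $g_U(y)=s\pm 2^{-a}\widehat f(\xi)\ne s$ for \emph{every} $y$, so all $2^a$ cosets are bad. Second, you need an upper bound on $|D|$ so that $\P[|V\cap D|=1]$ is comparable to $\E|V\cap D|$.

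The paper obtains such a bound by working inside a smaller flat. It assumes $\rho<2^{-k}$ and picks a $(d+1)$-flat $W$ with $\rho_W\le\rho$. There $|A\cap W|=2s$, and the exact identity $\rho_W=(|T|-1)/(2^{d+1}-1)$ gives $2^{d-k}\le|T|\le 2^{d+1-k}$. It then counts $a$-subspaces $L\le\widehat W$ with $L\cap T=\{0,\gamma\}$; the upper bound on $|T|$ is what produces $\prod_{i=1}^{a-1}(1-2^{i-k})$. Finally, using the pairing $Q,Q+w$ inside $Q^+$, it shows that at least a $2^{1-a}$ fraction of the $d$-flats meeting $W$ in a coset of $L^\perp$ are bad.

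Your global framework can also be completed directly, using your own fallback to bound $|D|$:
\begin{itemize}
\item If $|D|\ge 2^{d-k+a-1}$, apply Bonferroni to a subset of $D$ of exactly that size. This gives $\rho\ge 2^{1-a}\P[V\cap D\ne\emptyset]\ge 2^{-k}(1-2^{-a})(1-2^{a-k-2})$, which already suffices.
\item Otherwise, $\P[V\cap D=\{\xi\}\mid \xi\in V]\ge 1-|D|2^{-d}\ge 1-2^{a-1-k}$. Summing over $\xi\in D$ gives $\rho\ge 2^{-k}(1-2^{-(d-k)})(1-2^{-a})(1-2^{a-1-k})$.
\end{itemize}
Since $\prod_{i=1}^{a-1}(1-2^{i-k})\le 1-2^{a-1-k}$, either bound implies the proposition. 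None of this is present in your proposal as written.
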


\begin{proof}
Put $V=\F_2^n$ and fix $A\subseteq V$.  Call a $d$-flat $Q$
\emph{good} if $|Q\cap A|=s$ and \emph{bad} otherwise, and write
\[
  \rho:=\P_{Q\in\Aff(n,d)}[Q\text{ is bad}]
       =1-\lambda^*(n,d,s,A).
\]
If $d=k$, then the
right-hand side in the proposition is $1$, so we may assume that $d>k$.
If $\rho\ge2^{-k}$, then the conclusion is immediate.  We may therefore
assume that $\rho<2^{-k}\le1/4$.

For each $(d+1)$-flat $F$, define the local bad density as
\[
  \rho_F:=\P_{\substack{Q\subset F\\ \dim Q=d}}[Q\text{ is bad}].
\]
Note that we have
\[\rho=\E_{F\in\Aff(n,d+1)}\rho_F.\]
Hence there is a $(d+1)$-flat $W$ with $\rho_W\le\rho<2^{-k}$. For the rest of the proof we fix one such choice of $W$.
Replacing $A$ and $W$ by a common translate, we may further assume that $W$ is
a linear subspace.  Set $S=A\cap W$.

For every nonzero $\xi\in\widehat W$, the two level sets of $\xi$ are
disjoint parallel $d$-flats whose union is $W$.  If $|S|\ne2s$, at
least one member of every such pair is bad, which would give
$\rho_W\ge1/2$ contradicting our assumption that $\rho_W\le 1/4$.  Thus we must have $|S|=2s=j2^{k+1}$.

Let $f=\one_S$.  For a nonzero $\xi\in\widehat W$ and $b \in \F_2$, write
$H_{\xi,b}=\{x\in W:\xi(x)=b\}$.  Since $|S|=2s$, the two hyperplanes
$H_{\xi,0}$ and $H_{\xi,1}$ are either both good or both bad.  Moreover,
\[
  \widehat f(\xi)
  =|S\cap H_{\xi,0}|-|S\cap H_{\xi,1}|.
\]
Consequently, if $T=\supp(\widehat f)$, then
\[
  \rho_W=\frac{|T|-1}{2^{d+1}-1}.
\]
Since $\rho_W< 2^{-k}$, we have $|T|\le2^{d+1-k}$. On the other hand, applying \cref{lem:2adic-support} with $m=d+1$ and $\nu_2(|S|)=k+1$ gives $|T|\ge2^{d-k}$.

Thus, $W$ satisfies the following:
\begin{itemize}
    \item $|W\cap A| = j\cdot 2^{k+1}$,
    \item $2^{d-k}\le |\supp(\widehat{f})|\le 2^{d+1-k}$.
\end{itemize}

% We now count subspaces of $\widehat W$ which have a minimal nontrivial intersection with $T$.
Now we define the key object that we will be counting.

\begin{definition}
An $a$-dimensional subspace $L\le\widehat W$ is \emph{nice} if $L\cap T=\{0,\gamma\}$ for some nonzero $\gamma\in T$.
\end{definition}

For any $a$-dimensional subspace $L\le\widehat W$, let $\cQ_L$ be the set of affine $d$-flats $Q\subseteq V$ such that $Q\cap W$ is a coset of $L^\perp$. We need the following two counting claims about $\cQ_L$.

\begin{claim}\label{claim:extensions}
For any $a$-dimensional subspace $L\le\widehat W$, we have $|\cQ_L|=2^{a^2}$.
\end{claim}

\begin{proof}
For simplicity, we write $U=L^\perp$. Recall that we have $U\subseteq W\subseteq V$,  $\dim(W/U)=a$, and $\dim(V/W)=n-(d+1)=a-1$. For any $Q\in\cQ_L$, set $D=\dir(Q)$. By the definition of $\cQ_L$, we have $\dim(D)=d$ and $D\cap W=U$. Therefore, we know that $D/U$ and $W/U$ are linearly independent subspaces of $V/U$. We also have $\codim_{V/U}(D/U)=\codim_V(D)=a=\dim(W/U)$. Therefore, the number of feasible $D$ is the same as the number of linear maps from $(V/U)/(W/U)\simeq V/W$ to $W/U$, and there are $2^{a(a-1)}$ such maps. For every feasible $D$,
we have $D+W=V$, so each of the $2^{n-d}=2^a$ cosets of $D$ meets $W$
in a coset of $U$. Therefore, the total number is $2^{a(a-1)}2^a=2^{a^2}$.
\end{proof}

\begin{claim}\label{claim:forced-bad}
If $L$ is nice, then at least
$2^{a^2-a+1}$ of the flats in $\cQ_L$ are bad.
\end{claim}

\begin{proof}
Again, write $U=L^\perp$. Let $L\cap T=\{0,\gamma\}$ and choose $w\in W$ with
$\gamma(w)=1$.  By \eqref{eq:coset-fourier}, for every $x\in W$,
\[
  |A\cap(x+U)|
  =2^{-a}\left(\widehat f(0)
      +\widehat f(\gamma)(-1)^{\gamma(x)}\right).
\]
Since $\widehat f(\gamma)\ne0$, it follows that
\begin{equation}\label{eq:unequal-cosets}
  |A\cap(x+U)|\ne|A\cap(x+w+U)|.
\end{equation}

For $Q\in\cQ_L$, the vector $w$ does not lie in $\dir(Q)$, because
$\dir(Q)\cap W=U$ and $w\notin U$.  Hence
\[Q^+:=Q\cup(Q+w)\]
is a $(d+1)$-flat. Decompose $\cQ_L$ into disjoint parts $\cQ_{L, Q^+}$ according to $Q^+$.  In the
quotient $Q^+/U$, which is an affine $a$-subspace, the set
$(Q^+\cap W)/U$ is an affine line.  The members $Q$ of the corresponding
part $\cQ_{L, Q^+}$ are precisely the affine hyperplanes in $Q^+$ which meet this line in one
point. There are $2^a$ such hyperplanes, so the partition has $2^{a^2-a}$ parts.

We show that every part $\cQ_{L, Q^+}$ contains at least two bad flats.  Fix $Q^+$ and
write $C_0 = x+U$, $C_1 = x+w+U$, so that
\[
  Q^+\cap W=C_0\cup C_1,
  \qquad C_1=C_0+w.
\]
If
$|Q^+\cap A|\ne 2s$, then each complementary pair
$Q,Q+w$ contains a bad member.  This gives at least
$2^{a-1}\ge2$ bad flats.

Suppose instead that $|Q^+\cap A|=2s$.  Separate the flats in $\cQ_{L, Q^+}$
into two subsets $\cH_0$ and $\cH_1$ according to whether their intersection with
$W$ is $C_0$ or $C_1$.  Each family has size $2^{a-1}$.  A point of
$Q^+\setminus(C_0\cup C_1)$ lies in the same number of members of
$\cH_0$ and $\cH_1$, whereas a point of $C_i$ lies in every member of
$\cH_i$ and no member of $\cH_{1-i}$.  Therefore
\[
\begin{split}
  \sum_{Q\in\cH_0}|Q\cap A|
  -\sum_{Q\in\cH_1}|Q\cap A|
  =2^{a-1}\bigl(|C_0\cap A|-|C_1\cap A|\bigr),
\end{split}
\]
which is nonzero by \eqref{eq:unequal-cosets}.  Thus, not every member
of the part is good.  Finally, when $|Q^+\cap A|=2s$, a flat is bad if
and only if its complementary translate is bad.  There are therefore
at least two bad members in this case as well.

Multiplying two bad flats per part by the number of parts gives
$2\cdot2^{a^2-a}=2^{a^2-a+1}$.
\end{proof}

It remains to count the nice subspaces in $\widehat W$. We count ordered bases $v_1,\ldots,v_a$ for which $v_1$ is the unique nonzero point of $T$ in
their span. There are at least $2^{d-k}-1$ choices for $v_1$ since $|T|\ge 2^{d-k}$. Suppose $v_1,\ldots,v_i$ have been chosen.
We need to choose $v_{i+1}$ outside $T+\Span(v_1,\ldots,v_i)$, which has size at most $|T|2^i$. Thus, we have at least $2^{d+1}-2^{d-k+i+1}$ choices. Every nice $a$-subspace is counted exactly $\prod_{i=1}^{a-1}(2^a-2^i)$ times. Hence the number of nice subspaces is at least
\begin{equation}\label{eq:nice-count}
  \frac{(2^{d-k}-1)
    \prod_{i=1}^{a-1}(2^{d+1}-2^{d-k+i+1})}
  {\prod_{i=1}^{a-1}(2^a-2^i)}.
\end{equation}

Note that the families $\cQ_L$ for nice subspaces $L$ are
mutually disjoint, because $\dir(Q\cap W)=L^\perp$ determines $L$ for any $Q\in\cQ_L$.
Combining \eqref{eq:nice-count} with \cref{claim:forced-bad}, the
number of bad $d$-flats is at least
\begin{equation}\label{eq:numerator}
      \frac{(2^{d-k}-1)
    \prod_{i=1}^{a-1}(2^{d+1}-2^{d-k+i+1})}
  {\prod_{i=1}^{a-1}(2^a-2^i)}
  \cdot2^{a^2-a+1}.
\end{equation}
On the other hand, since $n=d+a$,
\begin{equation}\label{eq:all-flats}
\begin{split}
  |\Aff(n,d)|
  &=2^a\qbinom{n}{a}=2^{a(d+1)}
    \frac{\prod_{i=0}^{a-1}(1-2^{i-n})}
         {\prod_{i=1}^{a}(1-2^{-i})}.
\end{split}
\end{equation}
Dividing \eqref{eq:numerator} by \eqref{eq:all-flats} and simplifying the expression gives
\[
  \rho\ge
  \frac{2^{-k}(1-2^{-(d-k)})(1-2^{-a})
  \prod_{i=1}^{a-1}(1-2^{i-k})}
  {\prod_{i=0}^{a-1}(1-2^{i-n})}.
\]
The denominator is at most $1$, so discarding it proves the proposition.
\end{proof}

\begin{proof}[Proof of \cref{thm:main}]
Let $a=\lfloor k/2\rfloor$.  For $k$ sufficiently large, we have
$2\le a\le k$.  Since $\lambda^*(n,d,s)$ is non-increasing in $n$, we have
\[
  \lambda^*(d,s)\le\lambda^*(d+a,d,s).
\]
Applying \cref{prop:finite} and using
\[
\begin{split}
  (1-2^{-a})\prod_{i=1}^{a-1}(1-2^{i-k})
  &=1-O\left(2^{-a}+\sum_{i=1}^{a-1}2^{i-k}\right)=1-O(2^{-k/2})
\end{split}
\]
gives
\[
  \lambda^*(d,s)
  \le1-\bigl(1-2^{-(d-k)}\bigr)2^{-k}
  +O(2^{-3k/2}).\qedhere
\]
\end{proof}

\section{The $s=1$ case}\label{sec:singleton-proof}
In this section, we prove \cref{thm:singleton} using the standard special case of the
Gowers--Cauchy--Schwarz inequality~\cite{Gowers2001}.

Let $(f_\omega)_\omega$ be a tuple of functions $f_\omega:\F_2^n\to\R$ indexed by
$\omega\in\F_2^d$, and define
\[
  \Lambda_d((f_\omega)_\omega)
  :=\E_{x,v_1,\ldots,v_d}
    \prod_{\omega\in\F_2^d}
    f_\omega\left(x+\sum_{i=1}^d\omega_i v_i\right),
\]
where $x,v_1,\ldots,v_d$ are independent and uniform in $\F_2^n$.
The following is a special case of the Gowers--Cauchy--Schwarz inequality.

\begin{lemma}\label{lem:gcs}
Let $f:\F_2^n\to\R_{\ge0}$.  If one input of $\Lambda_d$ is the
constant function $1$ and the remaining $2^d-1$ inputs are $f$, then
\[
  \Lambda_d(1,f,\ldots,f)
  \le \Lambda_d(f,\ldots,f)^{(2^d-1)/2^d}.
\]
\end{lemma}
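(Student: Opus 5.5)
This is the standard Gowers--Cauchy--Schwarz argument: apply the Cauchy--Schwarz inequality $d$ times, once per direction $v_i$, and specialize the resulting product bound. Write $\|g\|_{U^d}:=\Lambda_d(g,\ldots,g)^{1/2^d}$; for real $g$ this is well defined and nonnegative, since $\Lambda_d(g,\ldots,g)$ is an average of squares (shown below). The claim to prove is the full inequality
\[
  \bigl|\Lambda_d((f_\omega)_\omega)\bigr|\le\prod_{\omega\in\F_2^d}\|f_\omega\|_{U^d}.
\]
Specializing gives the lemma. With $f_{0}=1$ we get $\Lambda_d(1,\ldots,1)=1$, so $\|1\|_{U^d}=1$. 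With the remaining $f_\omega=f$, the right-hand side becomes $\|f\|_{U^d}^{2^d-1}=\Lambda_d(f,\ldots,f)^{(2^d-1)/2^d}$. Nonnegativity of $f$ only ensures that the left-hand side is nonnegative, so the absolute value can be dropped.

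\textbf{Step 1 (one Cauchy--Schwarz in the last direction).} Split $\omega=(\omega',\omega_d)$ with $\omega'\in\F_2^{d-1}$. Substitute $y=x$ and $y'=x+v_d$; as $(x,v_d)$ ranges uniformly, $(y,y')$ is uniform on $(\F_2^n)^2$. Then
\[
  \Lambda_d((f_\omega))=\E_{v_1,\ldots,v_{d-1}}\Bigl(\E_y\prod_{\omega'}f_{(\omega',0)}\bigl(y+\textstyle\sum\omega'_iv_i\bigr)\Bigr)\Bigl(\E_{y'}\prod_{\omega'}f_{(\omega',1)}\bigl(y'+\textstyle\sum\omega'_iv_i\bigr)\Bigr).
\]
Apply Cauchy--Schwarz over $(v_1,\ldots,v_{d-1})$. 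Each squared factor expands back into a $\Lambda_d$ expression in which the $\omega_d=1$ layer is replaced by a copy of the $\omega_d=0$ layer, or vice versa. This yields
\[
  |\Lambda_d((f_\omega))|^2\le\Lambda_d((f_{(\omega',0)})_{\omega})\cdot\Lambda_d((f_{(\omega',1)})_{\omega}),
\]
where in each factor the family depends only on $\omega'$. Taking all $f_\omega=g$, the same identity writes $\Lambda_d(g,\ldots,g)$ as an average over $(v_1,\ldots,v_{d-1})$ of a square, which proves it is nonnegative.

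\textbf{Step 2 (iterate over the other directions).} Direction $i$ plays no special role, because $\Lambda_d$ is symmetric under permuting the $v_i$. So the same inequality holds for duplication along any coordinate $i$. Apply it successively for $i=d,d-1,\ldots,1$ to every factor produced so far. After $d$ rounds,
\[
  |\Lambda_d|^{2^d}\le\prod_{\omega}\Lambda_d(f_\omega,\ldots,f_\omega).
\]
The reason is that each factor has been duplicated in every coordinate, so it depends only on which layer was retained at each step, i.e.\ on a single $\omega$. Taking $2^d$-th roots gives the claim.

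The only delicate point is the bookkeeping in Step 2: one must check that after duplicating in coordinate $i$, the family stays constant along the coordinates already duplicated, so that the final factors are genuinely constant families. This follows by induction on the number of rounds. Real-valuedness is what lets us pass between squares and products without conjugates.
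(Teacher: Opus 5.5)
Your proof is correct and follows the same route as the paper: both specialize the Gowers--Cauchy--Schwarz inequality with one input equal to $1$ and the rest equal to $f$, using $\|1\|_{U^d}=1$. The only difference is that the paper cites Gowers--Cauchy--Schwarz as a black box, whereas you also prove it by $d$ iterated applications of Cauchy--Schwarz, and you do so correctly, including the nonnegativity of $\Lambda_d(g,\ldots,g)$ that the iteration needs.
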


\begin{proof}
For a nonnegative function $g$, write
\[
  \|g\|_{U^d}:=\Lambda_d(g,\ldots,g)^{1/2^d}.
\]
The Gowers--Cauchy--Schwarz inequality states
\[
  |\Lambda_d((g_\omega)_\omega)|
  \le\prod_{\omega\in\F_2^d}\|g_\omega\|_{U^d}.
\]
Apply it with one $g_\omega=1$ and all the other
$g_\omega=f$.  Since $\|1\|_{U^d}=1$, the right-hand side is
$\|f\|_{U^d}^{2^d-1}$, which is the claimed expression.
\end{proof}

We now prove \cref{thm:singleton}. 

\begin{proof}[Proof of \cref{thm:singleton}]
    
Fix $A\subseteq V:=\F_2^n$ and
sample $x,v_1,\ldots,v_d\in V$ independently and uniformly. Let
$p$ be the probability that exactly one of the $2^d$ labeled
points
\[
  x+\sum_{i\in S}v_i,\qquad S\subseteq[d],
\]
lies in $A$. Let $\delta_{n,d}$ be the probability that the directions
$v_1,\ldots,v_d$ fail to be linearly independent. Then
\begin{equation}\label{eq:degenerate-probability}
  \delta_{n,d}
  =1-\prod_{i=0}^{d-1}(1-2^{i-n})
  \le\sum_{i=0}^{d-1}2^{i-n}<2^{d-n},
\end{equation}
and hence $\delta_{n,d}\rightarrow 0$ as $n$ goes to infinity.

Conditional on linear independence, their span is uniform in
$\Gr(n,d)$ and the resulting affine flat is uniform in $\Aff(n,d)$.
Thus, $p\ge(1-\delta_{n,d})\lambda^*(n,d,1,A)$, and hence
\begin{equation}\label{eq:flat-vs-labeled}
  \lambda^*(n,d,1,A)\le\frac{p}{1-\delta_{n,d}}.
\end{equation}

Put $f=\one_{A^c}$. By symmetry among the $2^d$ labeled points,
\[
  \frac{p}{2^d}
  =\Lambda_d(\one_A,f,\ldots,f).
\]
Since $\one_A=1-f$, multilinearity and \cref{lem:gcs} give
\begin{align*}
  \frac{p}{2^d}
  &=\Lambda_d(1,f,\ldots,f)-\Lambda_d(f,\ldots,f)\le
  \Lambda_d(f,\ldots,f)^{(2^d-1)/2^d}
  -\Lambda_d(f,\ldots,f).
\end{align*}
Set
\[
  u:=\Lambda_d(f,\ldots,f)^{1/2^d}\in[0,1].
\]
Then
\[
  \frac{p}{2^d}\le u^{2^d-1}(1-u).
\]
The right-hand side is maximized at $u=1-2^{-d}$.  Therefore
\begin{equation}\label{eq:labeled-upper}
  p\le(1-2^{-d})^{2^d-1}.
\end{equation}
Combining \eqref{eq:flat-vs-labeled} and
\eqref{eq:labeled-upper}, taking the maximum over $A$, and then letting
$n\to\infty$ proves
\[
  \lambda^*(d,1)\le(1-2^{-d})^{2^d-1}.\qedhere
\]

\end{proof}

\printbibliography

\end{document}